\newtheorem{Theo}{Theorem}
\newtheorem{Prop}{Proposition}
\newtheorem{Lemma}{Lemma}
\newcommand{\PP}{{\mathbb P}}
\newcommand{\EE}{{\mathbb E}}
\newcommand{\inr}{I_{n,r}}
\newcommand{\Mnr}{M_{n,r}}
\newcommand{\mnr}{m_{n,r}}
\newcommand{\CQFD}
{%
\mbox{}%
\nolinebreak%
\hfill%
\rule{2mm}{2mm}%
\medbreak%
\par%
}
\begin{document}

\title{A note on extreme values and kernel estimators
of sample boundaries}
\author{St\'ephane Girard$^{1,}$\footnote{Corresponding author}
~and Pierre Jacob$^{2}$}
\date{ INRIA Rh\^one-Alpes, team Mistis,\\
655, avenue de l'Europe, Montbonnot,\\
38334 Saint-Ismier Cedex, France. \\ 
{\tt Stephane.Girard@inrialpes.fr} \\
\vspace*{2mm}
$^2$ EPS/I3M,
Universit\'e Montpellier 2,\\
Place Eug\`ene Bataillon, 34095 Montpellier Cedex 5, France.
{\tt jacob@math.univ-montp2.fr}}

\maketitle

\begin{abstract}
In a previous paper~\cite{ESAIM}, we studied a kernel estimate of the upper
edge of a two-dimensional bounded set, based upon the extreme values of a
Poisson point process. The initial paper~\cite{G64}
on the subject  treats
the frontier as the boundary of the support set for a density
and the points as a random sample.
 We claimed in~\cite{ESAIM} that we are able to deduce the random sample case from the
point process case. 
The present note gives
some essential indications to this end, including a method
which can be of general interest.  \\

\noindent {\bf Keywords and phrases:}  support estimation, 
asymptotic normality, kernel estimator, extreme values.  
\end{abstract}

\section{Introduction and main results}
\label{secun}

As in the early paper of Geffroy~\cite{G64}, we address the problem of estimating a
subset $D$ of ${\mathbb R}^{2}$ given a sequence of random points $\Sigma
_{n}=\left\{  Z_{1},...,Z_{n}\right\}  $ where the $Z_{i}=(X_{i,}Y_{i})$ are
independent and uniformly distributed on $D$. The problem is reduced to
functional estimation by defining%
\[
D=\left\{  \left(  x,y\right)  \in{\mathbb R}^{2}/0\leq x\leq1;0\leq y\leq
f\left(  x\right)  \right\},
\]
where $f$ is a strictly positive function. Given an increasing sequence of
integers $0<k_n<n$, $k_{n}\uparrow\infty$, for $r=1,...,k_{n}$, let $I_{n,r}=[\left(
r-1\right)  /k_{n},r/k_{n}[$ and
\begin{equation}
\label{equn}
U_{n,r}=\max\left\{  Y_{i}/\left(  X_{i},Y_{i}\right)  \in\Sigma_{n};X_{i}\in 
I_{n,r}\right\},
\end{equation}
where it is conveniently understood that $\max \varnothing
=0$. Now, let $K$ be a bounded density, which has a support within a
compact interval $\left[  -A,A\right]$, a bounded first derivative and which
is piecewise $C^{2}$, and $h_{n}\downarrow0$ a sequence of positive numbers.
Following~\cite{ESAIM}, Section~6, we define the estimate
\begin{equation}
\label{eqdeux}
\hat{f}_{n}(x)=\frac{1}{k_{n}}\sum\limits_{r=1}^{k_{n}}K_{n}(x-x_{r})\left(
U_{n,r}+\frac{1}{n-k_{n}}\sum\limits_{s=1}^{k_{n}}U_{n,s}\right)
,x\in\mathbb{R},
\end{equation}
where $x_r$ is the center of $I_{n,r}$ and, as usually,
$$
K_{n}\left(  t\right)  =\frac{1}{h_{n}}K\left(  \frac{t}{h_{n}}\right)
,t\in\mathbb{R}.
$$
The perhaps curious second term in brackets in formula~(\ref{eqdeux}) is designed for
reducing the bias (see~\cite{ESAIM}, Lemma~8). Note that $\hat f_n$
can be rewritten as a linear combination of extreme values
$$
\hat{f}_{n}(x)=\frac{1}{k_n}\sum_{r=1}^{k_n} \beta_{n,r}(x) U_{n,r},
$$
where 
$$
\beta_{n,r}(x)= \frac{1}{k_{n}} K_{n}(x-x_{r}) + \frac{1}{k_n(n-k_{n})}
\sum\limits_{s=1}^{k_{n}} K_n(x-x_s).
$$
In the sequel, we suppose 
that $f$ is $\alpha$-Lipschitzian, $0<\alpha\leq1$, and strictly positive.
Our result is the following:

\begin{Theo}
\label{thmain1}
If $h_{n}k_{n}\rightarrow\infty$, $n=o(  k_{n}^{1/2}h_{n}^{-1/2-\alpha})$,
$n=o(k_{n}^{5/2}h_{n}^{3/2})$ and $k_{n}=o(n/\ln n)$,
then for every $x\in\left]  0,1\right[  ,$%
$$
\left(  nh_{n}^{1/2}/k_{n}^{1/2}\right)  \left(  \hat{f}_{n}\left(  x\right)
-f\left(  x\right)  \right)  \Rightarrow{\mathcal N}\left(  0,\sigma ^{2}\right),
$$
with $\sigma=\|K\|_2/c$.  
\end{Theo}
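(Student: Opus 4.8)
The plan is to carry out the programme announced in the abstract: to deduce the i.i.d.\ sample case from the Poisson point process case of \cite{ESAIM} by a de-Poissonization argument, realising both models on one probability space. First I would fix an i.i.d.\ uniform sequence $(Z_i)_{i\ge 1}$ on $D$ and an independent Poisson variable $M$ with mean $n$, and set $\Sigma_n=\{Z_1,\dots,Z_n\}$ (the sample) and $\Pi_n=\{Z_1,\dots,Z_M\}$; the latter is a Poisson point process on $D$ with mean measure $n\,\mathrm{Unif}(D)$. Writing $\tilde U_{n,r}$ for the cell maxima of $\Pi_n$ and $\tilde f_n$ for the estimator~(\ref{eqdeux}) built from them with the \emph{same} deterministic coefficients $\beta_{n,r}(x)$, the decisive feature of the Poisson model is that the $\tilde U_{n,r}$, $1\le r\le k_n$, are mutually independent, since the restrictions of $\Pi_n$ to disjoint cells are independent. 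This independence is precisely what allows the Lindeberg argument of \cite{ESAIM} to apply and to give $(n h_n^{1/2}/k_n^{1/2})(\tilde f_n(x)-f(x))\Rightarrow\mathcal N(0,\sigma^2)$ with $\sigma=\|K\|_2/c$; I would treat this Poisson central limit theorem as established.

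By Slutsky's lemma it then remains to prove that $(n h_n^{1/2}/k_n^{1/2})(\hat f_n(x)-\tilde f_n(x))\to 0$ in probability. Since
\[
\hat f_n(x)-\tilde f_n(x)=\frac{1}{k_n}\sum_{r=1}^{k_n}\beta_{n,r}(x)\,(U_{n,r}-\tilde U_{n,r}),
\]
and the two point sets differ only by the $|M-n|$ points whose indices lie between $\min(M,n)$ and $\max(M,n)$, the summand is nonzero for at most $|M-n|$ values of $r$, with $|M-n|=O_{\PP}(n^{1/2})$ by the central limit theorem for the Poisson law. Each such discrepant point lies in a single cell and alters that cell's maximum by at most the gap between its two largest ordinates.

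The heart of the matter, and the step I expect to be the main obstacle, is to bound this difference once it is multiplied by the rate. Two ingredients are needed. On a cell of width $1/k_n$ the ordinates are, up to the Lipschitz error, uniform on $[0,f(x_r)]$ and the cell holds of order $n/k_n$ points, so the spacing between its two top order statistics is $O_{\PP}(k_n/n)$; the hypothesis $k_n=o(n/\ln n)$ is what makes every cell nonempty and makes these extreme-value approximations hold simultaneously over all cells, through a coupon-collector estimate. Next, the weights obey $\beta_{n,r}(x)=O(1/(k_nh_n))$ for $x_r$ in the $O(h_n)$ support window about $x$ and $O(1/n)$ through the bias-correction part, while each discrepant point falls in that window with probability $O(h_n)$, so only $O_{\PP}(n^{1/2}h_n)$ of them are relevant. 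Combining these gives $|\hat f_n(x)-\tilde f_n(x)|=O_{\PP}(1/(k_n n^{1/2}))$, hence a scaled difference of order $n^{1/2}h_n^{1/2}/k_n^{3/2}$, which tends to $0$ because $n=o(k_n^{5/2}h_n^{3/2})$ forces $n=o(k_n^{3}/h_n)$. The genuinely delicate point is to make the uniform control of the top-gaps rigorous and to handle the random allocation of the discrepant points; the remaining conditions $h_nk_n\to\infty$ and $n=o(k_n^{1/2}h_n^{-1/2-\alpha})$ are those already used in \cite{ESAIM} to ensure, respectively, enough cells per kernel window and a kernel-smoothed bias $o(k_n^{1/2}/(nh_n^{1/2}))$, and are inherited through the Poisson limit. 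Slutsky's lemma then yields the claim.
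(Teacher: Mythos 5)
Your high-level plan (take the Poisson CLT of \cite{ESAIM} as given, then show the sample/Poisson discrepancy is negligible and apply Slutsky) is the same as the paper's, and your coupled process $\Pi_n=\{Z_1,\dots,Z_M\}$, $M\sim{\mathcal P}(n)$, is exactly the paper's $\Sigma_{0,n}$. But your method for controlling $\hat f_n-\hat f_{0,n}$ is genuinely different, and it is precisely the analysis the paper engineers its argument to avoid. The paper introduces two further Poisson processes $\Sigma_{1,n}\subseteq\Sigma_{0,n}\subseteq\Sigma_{2,n}$ with intensities $n(1\mp\gamma_n)$; on the event $E_n=\{\Sigma_{1,n}\subseteq\Sigma_n\subseteq\Sigma_{2,n}\}$, whose complement has probability at most $2\exp(-n\gamma_n^2/8)$ (Lemma~\ref{lemun}), both $\hat f_n$ and $\hat f_{0,n}$ are squeezed between $\hat f_{1,n}$ and $\hat f_{2,n}$ (Lemma~\ref{lemsand}, using only $\beta_{n,r}\geq 0$), so $|\hat f_n-\hat f_{0,n}|{\mathbf 1}_{E_n}\leq \hat f_{2,n}-\hat f_{1,n}$, and $\EE(U_{2,n,r}-U_{1,n,r})=O(k_n\gamma_n/n)$ follows from the explicit Poisson tail formulas (Lemma~\ref{lemdeux}); the choice $\gamma_n=k_n^{-1/2}$ closes Proposition~\ref{prop1}. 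No spacings between order statistics and no bookkeeping of where discrepant points fall are ever needed. In your proposal, by contrast, the decisive estimate --- simultaneous control of the top-spacings over the random set of cells hit by the $|M-n|$ discrepant points, together with the dependence between the allocation of those points and the configurations producing the maxima --- is exactly the step you defer as ``the genuinely delicate point,'' so what you have is a plan rather than a proof. For the record, it can be completed without any uniform spacing estimate: conditionally on $M$ the discrepant points are i.i.d.\ uniform on $D$ and independent of the retained points, so (say for $M<n$) $\EE\left(U_{n,r}-\tilde U_{n,r}\right)\leq \EE|M-n|\cdot\frac{c}{2k_n}\sup_r\EE\left(M_{n,r}-\tilde U_{n,r}\right)^2=O\left(k_n/n^{3/2}\right)$ uniformly in $r$, whence by Lemma~\ref{lemtrois} a scaled error $O\left((h_nk_n/n)^{1/2}\right)\to 0$; but nothing of this kind appears in your text.

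There is also a quantitative error, inherited from a misprint in the paper. You work with $\hat f_n=\frac{1}{k_n}\sum_r\beta_{n,r}U_{n,r}$; expanding the definition~(\ref{eqdeux}) shows the correct identity is $\hat f_n=\sum_r\beta_{n,r}U_{n,r}$, which is what makes $\sum_r\beta_{n,r}\to 1$ (Lemma~\ref{lemtrois}) meaningful and is the version actually used in the proof of Proposition~\ref{prop1}. With the correct weights, your own counting --- $O_{\PP}(n^{1/2}h_n)$ relevant discrepant points, each contributing a weight $O(1/(k_nh_n))$ times a spacing $O_{\PP}(k_n/n)$ --- gives $|\hat f_n(x)-\tilde f_n(x)|=O_{\PP}(n^{-1/2})$, not $O_{\PP}(1/(k_nn^{1/2}))$. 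The scaled difference is then of order $(nh_n/k_n)^{1/2}$, so the condition you must verify is $nh_n=o(k_n)$, not $n=o(k_n^3/h_n)$; the two differ by a factor $k_n^2$, which is not harmless a priori. The needed condition does follow from the hypotheses --- raise $n=o(k_n^{1/2}h_n^{-1/2-\alpha})$ to the power $3/4$, raise $n=o(k_n^{5/2}h_n^{3/2})$ to the power $1/4$, and multiply to get $n=o(k_nh_n^{-3\alpha/4})=o(k_n/h_n)$, since $3\alpha/4<1$ and $h_n\to 0$ --- but this verification is absent from your argument, which checks the wrong inequality.
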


\section{Proofs}
\label{secdeux}
If formally the definition of $\hat{f}_{n}$ is identical here and in~\cite{ESAIM}
the fundamental difference lies in the fact that in~\cite{ESAIM} the sample
is replaced by a homogeneous Poisson point process with a mean measure
$\mu_{n}=nc\lambda 1_D$ where $\lambda$ is the Lebesgue measure of ${\mathbb R}^{2}$
and $c^{-1}=\lambda(D)$. Here we denote by $\Sigma_{0,n}$ this point
process and we need, for the sake of approximation, two further Poisson point
processes $\Sigma_{1,n}$ and $\Sigma_{2,n}$. 
The point processes $\Sigma_{j,n}$ are constructed as in~\cite{NONPAR}, extending an
original idea of J.~Geffroy. Given a sequence $\gamma_{n}\downarrow0$, consider
independent Poisson random variables $N_{1,n},\ M_{1,n},\ M_{2,n}$,
independent of the sequence $\left(  Z_{n}\right)  $, with parameters
$\EE\left(  N_{1,n}\right)  =n(1-\gamma_{n})$ and $\EE\left(  M_{1,n}\right)
=\EE\left(  M_{2,n}\right)  =n\gamma_{n}$. Define $N_{0,n}=N_{1,n}+M_{1,n}$, 
$N_{2,n}=N_{0,n}+M_{2,n}$ and take $\Sigma_{j,n}=\left\{  Z_{1}%
,...,Z_{N_{j,n}}\right\}$, $j=0,1,2$. 
For $j=0,1,2$ we define
$U_{j,n,r}$ and $\hat{f}_{j,n}$ by imitating~(\ref{equn}) and (\ref{eqdeux}).
Finally, let us introduce the event $E_{n}=\{\Sigma_{1,n}\subseteq\Sigma_{n}\subseteq\Sigma_{2,n}\}$. The following lemma is the starting point of
our "random sandwiching" technique.
\begin{Lemma}
\label{lemsand}
One always has $\hat {f}_{1,n}\leq\hat{f}_{0,n}\leq\hat{f}_{2,n}$.
Moreover, if $E_n$ holds,
$\hat{f}_{1,n}\leq\hat{f}_{n}\leq\hat{f}_{2,n}$.
\end{Lemma}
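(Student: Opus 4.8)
The plan is to reduce both assertions to two elementary facts: the monotonicity of the slice-wise maxima under set inclusion, and the nonnegativity of the weights $\beta_{n,r}(x)$. First I would record that the three auxiliary processes are deterministically nested. Since $N_{1,n},M_{1,n},M_{2,n}$ are Poisson, hence nonnegative, we have $N_{0,n}=N_{1,n}+M_{1,n}\geq N_{1,n}$ and $N_{2,n}=N_{0,n}+M_{2,n}\geq N_{0,n}$, so that $N_{1,n}\leq N_{0,n}\leq N_{2,n}$ surely. As each $\Sigma_{j,n}=\{Z_{1},\dots,Z_{N_{j,n}}\}$ is an initial segment of one and the same sequence $(Z_{i})$, this gives the (pathwise) inclusions $\Sigma_{1,n}\subseteq\Sigma_{0,n}\subseteq\Sigma_{2,n}$.

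Next I would exploit the monotonicity of the maxima. For any two finite point sets $\Sigma\subseteq\Sigma'$ and any index $r$, the slice maximum over $I_{n,r}$ can only grow when points are added: enlarging the set either leaves the relevant slice unchanged or introduces new ordinates $Y_{i}\geq 0$, and with the convention $\max\varnothing=0$ (consistent with $Y_{i}\geq 0$ on $D$) one gets $U_{n,r}(\Sigma)\leq U_{n,r}(\Sigma')$. Applied to the nested processes above, this yields, for every $r$, the term-by-term inequalities $U_{1,n,r}\leq U_{0,n,r}\leq U_{2,n,r}$.

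The decisive point is that the four estimators are the \emph{same} nonnegative linear functional of their respective maxima. Indeed, $\hat{f}_{j,n}(x)=\frac{1}{k_{n}}\sum_{r=1}^{k_{n}}\beta_{n,r}(x)\,U_{j,n,r}$ with weights $\beta_{n,r}(x)$ that depend only on the design ($K$, $h_{n}$, $k_{n}$, $n$ and $x$) and not on the point configuration, so they are common to $j=0,1,2$ and to $\hat{f}_{n}$ itself. Because $K$ is a density, $K\geq 0$, hence $K_{n}\geq 0$ and $\beta_{n,r}(x)\geq 0$. Weighting the inequalities $U_{1,n,r}\leq U_{0,n,r}\leq U_{2,n,r}$ by these nonnegative $\beta_{n,r}(x)$ and summing over $r$ then gives $\hat{f}_{1,n}\leq\hat{f}_{0,n}\leq\hat{f}_{2,n}$, which is the first claim. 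For the second, on $E_{n}$ we have by definition $\Sigma_{1,n}\subseteq\Sigma_{n}\subseteq\Sigma_{2,n}$, so the same monotonicity gives $U_{1,n,r}\leq U_{n,r}\leq U_{2,n,r}$ for every $r$, and the identical weighting argument yields $\hat{f}_{1,n}\leq\hat{f}_{n}\leq\hat{f}_{2,n}$.

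The only genuine subtlety, and the step I would be most careful about, is the claim that the weights $\beta_{n,r}(x)$ are truly shared by all four estimators. This requires that the normalizing constants appearing in~(\ref{eqdeux}) be the deterministic design quantities $n$ and $k_{n}$, rather than the random sizes $N_{j,n}$; otherwise the bias-correction factor $1/(n-k_{n})$ would itself vary with $j$, and the clean term-by-term comparison could break down, since a larger process simultaneously increases the maxima and shrinks that factor. The very fact that the lemma is asserted to hold \emph{always} confirms this reading. Once the weights are pinned down as configuration-free and nonnegative, everything else is routine.
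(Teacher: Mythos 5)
Your proof is correct and follows essentially the same route as the paper's: the pathwise inclusions $\Sigma_{1,n}\subseteq\Sigma_{0,n}\subseteq\Sigma_{2,n}$ (and $\Sigma_{1,n}\subseteq\Sigma_{n}\subseteq\Sigma_{2,n}$ on $E_{n}$), the monotonicity of the slice maxima under set inclusion, and the nonnegativity of the common weights $\beta_{n,r}(x)$, which is exactly the paper's two-line argument spelled out in full. Your closing observation that the normalization $1/(n-k_{n})$ must be the deterministic design constant rather than a random size $N_{j,n}$ is a genuine subtlety the paper leaves implicit, and your reading of it is the correct one.
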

\begin{proof}
The definition of the random sets $\Sigma_{j,n}$, $j=0,1,2$ implies that
$\Sigma_{1,n}\subseteq\Sigma_{0,n}\subseteq\Sigma_{2,n}$.
Thus, since $\beta_{n,r}(x)\geq 0$ for all $r=1,\dots,k_n$,
we have $\hat {f}_{1,n}\leq\hat{f}_{0,n}\leq\hat{f}_{2,n}$.
Similarly, $E_n$ implies that $\hat{f}_{1,n}\leq\hat{f}_{n}\leq\hat{f}_{2,n}$.
\CQFD
\end{proof}
The success of the approximation between $\hat{f}_{n}$ and $\hat{f}_{0,n}$ is
based upon two lemmas. The first one shows how large is the probability of the
event $E_n$.
\begin{Lemma}
\label{lemun}
For $n$ large enough,
$$
\PP\left(  \Omega\smallsetminus E_{n}\right) \leq
2 \exp\left(-\frac{1}{8}n\gamma_n^2\right) .
$$
\end{Lemma}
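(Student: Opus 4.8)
The plan is to reduce the set-theoretic event $E_n$ to a pair of one-sided deviation events for the Poisson counts and then control each by a Chernoff (exponential Markov) bound. The key structural observation is that $\Sigma_n=\{Z_1,\dots,Z_n\}$ and each $\Sigma_{j,n}=\{Z_1,\dots,Z_{N_{j,n}}\}$ are initial segments of one and the same sequence $(Z_i)$. Consequently $N_{1,n}\le n$ forces $\Sigma_{1,n}\subseteq\Sigma_n$, and $N_{2,n}\ge n$ forces $\Sigma_n\subseteq\Sigma_{2,n}$; hence $\{\Sigma_{1,n}\not\subseteq\Sigma_n\}\subseteq\{N_{1,n}>n\}$ and $\{\Sigma_n\not\subseteq\Sigma_{2,n}\}\subseteq\{N_{2,n}<n\}$ deterministically. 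Writing $\Omega\smallsetminus E_n$ as the union of these two failures and applying the union bound, I would first establish
$$
\PP(\Omega\smallsetminus E_n)\le \PP(N_{1,n}>n)+\PP(N_{2,n}<n).
$$

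Next I would treat the upper tail of $N_{1,n}$, which is Poisson with mean $n(1-\gamma_n)$, so that $\{N_{1,n}>n\}$ is a deviation of size $n\gamma_n$ above the mean. From $\PP(N_{1,n}\ge n)\le e^{-\lambda n}\,\EE[e^{\lambda N_{1,n}}]=\exp\bigl(-\lambda n+n(1-\gamma_n)(e^{\lambda}-1)\bigr)$, the optimal choice $\lambda=-\ln(1-\gamma_n)>0$ gives $\PP(N_{1,n}>n)\le \exp\bigl(n[\gamma_n+\ln(1-\gamma_n)]\bigr)$. Since $\gamma_n+\ln(1-\gamma_n)=-\gamma_n^2/2-\gamma_n^3/3-\cdots\le -\gamma_n^2/2$ for every $\gamma_n\in(0,1)$, this yields $\PP(N_{1,n}>n)\le \exp(-n\gamma_n^2/2)$.

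Symmetrically, $N_{2,n}$ is Poisson with mean $n(1+\gamma_n)$ and $\{N_{2,n}<n\}$ is a lower deviation of size $n\gamma_n$. Minimizing $\exp\bigl(\lambda n+n(1+\gamma_n)(e^{-\lambda}-1)\bigr)$ over $\lambda>0$ at $\lambda=\ln(1+\gamma_n)$ gives $\PP(N_{2,n}<n)\le \exp\bigl(n[\ln(1+\gamma_n)-\gamma_n]\bigr)$. The alternating expansion $\ln(1+\gamma_n)-\gamma_n\le -\gamma_n^2/2+\gamma_n^3/3\le -\gamma_n^2/3$ is valid once $\gamma_n\le 1/2$, which holds for $n$ large because $\gamma_n\downarrow 0$; hence $\PP(N_{2,n}<n)\le \exp(-n\gamma_n^2/3)$ for $n$ large enough.

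Both rate constants, $1/2$ and $1/3$, exceed $1/8$, so for $n$ large enough each of the two probabilities is at most $\exp(-\tfrac18 n\gamma_n^2)$, and summing gives the announced bound. The only point requiring care — and the place where $\gamma_n\downarrow 0$ is genuinely used — is verifying the elementary inequalities for $\gamma_n+\ln(1-\gamma_n)$ and $\ln(1+\gamma_n)-\gamma_n$ on the relevant range of $\gamma_n$, which is precisely why the conclusion is stated only \emph{for $n$ large enough}. This constant bookkeeping is the main (though mild) obstacle; the rest is the routine Chernoff computation, and the generous constant $1/8$ is deliberately loose so as to absorb the $o(1)$ corrections coming from $\gamma_n$.
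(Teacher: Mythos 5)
Your proposal is correct and takes essentially the same route as the paper: both reduce $\Omega\smallsetminus E_{n}$ to the two Poisson deviation events $\{N_{1,n}>n\}$ and $\{N_{2,n}<n\}$ and control each by a Laplace-transform (Chernoff) tail bound, concluding for $n$ large enough because $\gamma_n\downarrow 0$. The only difference is presentational: the paper quotes a ready-made two-sided inequality $\PP(|X-\lambda|>\varepsilon)<\exp(-\varepsilon^{2}/4\lambda)$ from Lemma~1 of~\cite{NONPAR}, giving exponents $\gamma_n^{2}/4(1\mp\gamma_n)$ that are then relaxed to $1/8$, whereas you carry out the optimization explicitly and obtain the sharper constants $1/2$ and $1/3$ before relaxing to the same $1/8$.
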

\begin{proof}
Using the Laplace transform of a Poisson random variable $X$ with
parameter $\lambda>0$, we get for $\varepsilon/2\lambda$ small enough,
$$
\PP\left(  |X-\lambda|>\varepsilon\right)  <\exp\left(  -\varepsilon^{2}
/4\lambda\right),
$$
see for instance Lemma~1 in~\cite{NONPAR}.
 Clearly, $\Omega\smallsetminus E_{n}=\left\{
N_{1,n}>n\right\}  \cup\left\{  N_{2,n}<n\right\}  $ and thus
$$
\PP\left(  \Omega\smallsetminus E_{n}\right) \leq
 \exp\left(-\frac{n\gamma_n^2}{4(1-\gamma_n)}\right) +
 \exp\left(-\frac{n\gamma_n^2}{4(1+\gamma_n)}\right).
$$
The lemma follows.\CQFD
\end{proof}
The second lemma is essential to control the approximation obtained when the
event $E_{n}$ holds.

\begin{Lemma}
\label{lemdeux}
If $k_{n}=o(n/\log n)$ and $n=O\left(  k_{n}^{1+\alpha}\right)  $, then
uniformly on $r=1,\dots,k_n$,
$$
\EE\left(U_{2,n,r}-U_{1,n,r}\right)  = O\left({\frac{k_n\gamma_n}{n}}\right).
$$
\end{Lemma}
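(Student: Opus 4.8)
The plan is to reduce the whole statement to a single deterministic integral estimate by exploiting the Poisson structure of the auxiliary processes. Since $N_{j,n}$ is Poisson and independent of the sample $(Z_i)$, each $\Sigma_{j,n}$ is a Poisson point process on $D$, with mean measure $\nu_j c\lambda 1_D$ obtained from that of $\Sigma_{0,n}$ by replacing $n$ with $\nu_j$, where $\nu_1=n(1-\gamma_n)$ and $\nu_2=n(1+\gamma_n)$. Writing
\[
a_r(t)=c\int_{\inr}(f(x)-t)_+\,dx
\]
for $c$ times the area of the part of $\inr\times\R_+$ that lies in $D$ above the level $t$, the Poisson void-probability formula gives, for every $t\ge 0$,
\[
\PP(U_{j,n,r}\le t)=\exp(-\nu_j\,a_r(t)),\qquad j=1,2.
\]

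Because $\Sigma_{1,n}\subseteq\Sigma_{2,n}$ forces $U_{1,n,r}\le U_{2,n,r}$, and both variables are bounded by $\sup f<\infty$, I would compute the gap through the tail formula $\EE U=\int_0^\infty\PP(U>t)\,dt$:
\[
\EE(U_{2,n,r}-U_{1,n,r})=\int_0^\infty\big(e^{-\nu_1 a_r(t)}-e^{-\nu_2 a_r(t)}\big)\,dt .
\]
Using the elementary inequality $e^{-x}-e^{-y}\le (y-x)e^{-x}$ for $0\le x\le y$, together with $\nu_2-\nu_1=2n\gamma_n$, this is at most $2n\gamma_n\int_0^\infty a_r(t)\,e^{-\nu_1 a_r(t)}\,dt$. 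Everything then comes down to the deterministic bound $\int_0^\infty a_r(t)e^{-\nu_1 a_r(t)}\,dt=O(k_n/n^2)$, uniformly in $r$, since multiplying by $2n\gamma_n$ produces exactly the announced $O(k_n\gamma_n/n)$.

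To estimate that integral I would split the range of $t$ at $f_{r,*}=\min_{\inr}f$. For $t\le f_{r,*}$ the super-level set is the whole strip, so $a_r$ is the affine function $c(\bar f_r-t)/k_n$ with $\bar f_r=k_n\int_{\inr}f$ and constant slope $-c/k_n$; the substitution $u=a_r(t)$ turns this piece into $(k_n/c)\int_0^\infty u\,e^{-\nu_1 u}\,du=(k_n/c)\nu_1^{-2}=O(k_n/n^2)$, as $\nu_1=\Theta(n)$. The delicate part, and the main obstacle, is the narrow top band $f_{r,*}<t\le f_r^*=\max_{\inr}f$, where $f$ genuinely varies across the strip and $a_r$ is no longer affine; there I would use only the pointwise maximum $a_r e^{-\nu_1 a_r}\le 1/(e\nu_1)=O(1/n)$ and the fact that $\alpha$-Lipschitzness bounds the width of this band by the oscillation $f_r^*-f_{r,*}\le L k_n^{-\alpha}$, so the band contributes $O(k_n^{-\alpha}/n)$. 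The hypothesis $n=O(k_n^{1+\alpha})$ is precisely what converts $k_n^{-\alpha}/n$ into $O(k_n/n^2)$, so both pieces match the target. All constants ($c$, the Lipschitz constant $L$, and the positive lower bound on $f$) are independent of $r$, giving the uniformity; the standing assumption $k_n=o(n/\log n)$, which in particular yields $\nu_1 a_r(0)\to\infty$ and hence asymptotically nonempty strips uniformly in $r$, is what the companion estimates of $\EE(U_{j,n,r})$ rely on.
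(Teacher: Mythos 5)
Your proof is correct, and it in fact proves slightly more than the paper does. The skeleton is the same — the tail-probability representation $\EE(U_{2,n,r}-U_{1,n,r})=\int_0^\infty\left(\PP(U_{2,n,r}>t)-\PP(U_{1,n,r}>t)\right)dt$, the Poisson void-probability formula, the split of the integration range at $m_{n,r}=\min_{I_{n,r}}f$, and the use of the Lipschitz condition together with $n=O(k_n^{1+\alpha})$ on the band $[m_{n,r},M_{n,r}]$ — but the treatment of the main piece $[0,m_{n,r}]$ is genuinely different. The paper integrates the two exponentials in closed form, which creates two boundary terms: the term at $t=m_{n,r}$ (its $A_{1,n,r}$), controlled by the elementary inequality $0<ae^{y}-be^{x}<(a-b)+b(y-x)$ valid for $x<y<0<b<a$, and the term at $t=0$ (its $A_{2,n,r}$), which is shown to be $o(n^{-s})$ for every $s$; this last step is exactly where the hypotheses $k_n=o(n/\log n)$ and $\inf f>0$ enter. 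You avoid closed-form integration altogether: the pointwise bound $e^{-\nu_1 a}-e^{-\nu_2 a}\le 2n\gamma_n\,a\,e^{-\nu_1 a}$ reduces everything to $\int a_r e^{-\nu_1 a_r}$, which on the affine part is computed by substitution (no boundary terms, giving $k_n/(c\nu_1^2)$), and on the band is bounded by $(M_{n,r}-m_{n,r})/(e\nu_1)$ via $\sup_{u\ge 0}u e^{-\nu_1 u}=1/(e\nu_1)$, where the paper instead uses the bound $2\gamma_n(n/k_n)(M_{n,r}-m_{n,r})^2$. Both routes give $O(k_n\gamma_n/n)$ uniformly in $r$, but yours never uses $k_n=o(n/\log n)$ nor the strict positivity of $f$, so it establishes the lemma under weaker hypotheses (that assumption remains necessary elsewhere, e.g.\ in Lemma~\ref{lemun} and Proposition~\ref{prop1}, which is presumably why you correctly flag it as unused rather than as a defect). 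A last small point in your favor: you carry the intensity constant $c$ correctly through the void probabilities, whereas the paper's displayed formula for $A_{n,r}$ drops it — harmless for the order of magnitude, but your version is the accurate one.
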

\begin{proof}
Let us define $\mnr=\displaystyle\min_{x\in\inr} f(x)$ and $\displaystyle\Mnr=\max_{x\in\inr}f(x)$.  Then,
\begin{eqnarray*}
&&\EE\left(  U_{2,n,r}-U_{1,n,r}\right)  \\
&=& \int_{0}^{M_{n,r}}\left(  \PP\left( U_{2,n,r}>y\right)  -\PP\left(  U_{1,n,r}>y\right)  \right)  dy\\
&  =&\int_{0}^{m_{n,r}} \left(  \PP\left( U_{2,n,r}>y\right)  -\PP\left(  U_{1,n,r}>y\right)  \right)  dy 
+\int_{m_{n,r}}^{M_{n,r}} \left(  \PP\left( U_{2,n,r}>y\right)  -\PP\left(  U_{1,n,r}>y\right)  \right)  dy\\
&\stackrel{def}{=}&A_{n,r}+B_{n,r}.  
\end{eqnarray*}
Introducing $\lambda_{n,r}=\int_{I_{n,r}}f(x)dx,$ we can write $A_{n,r}$ as%
\[
A_{n,r}=\int_{0}^{m_{n,r}}\exp\left(  \frac{n(1-\gamma_{n})}{k_{n}}\left(
y-k_{n}\lambda_{n,r}\right)  \right)  dy-\exp\left(  \frac{n(1+\gamma_{n}%
)}{k_{n}}\left(  y-k_{n}\lambda_{n,r}\right)  \right)  dy.  
\]
Now, $A_{n,r}$ is expanded as a sum $A_{1,n,r}+A_{2,n,r}$ with%
\begin{eqnarray*}
A_{1,n,r}  &  =&\frac{k_{n}}{n(1-\gamma_{n})}\exp\left(  \frac{n(1-\gamma_{n}%
)}{k_{n}}\left(  m_{n,r}-k_{n}\lambda_{n,r}\right)  \right)  -\frac{k_{n}%
}{n(1+\gamma_{n})}\exp\left(  \frac{n(1+\gamma_{n})}{k_{n}}\left(
m_{n,r}-k_{n}\lambda_{n,r}\right)  \right), \\
A_{2,n,r}  &  =&\frac{k_{n}}{n(1+\gamma_{n})}\exp\left(  -n(1+\gamma_{n}%
)\lambda_{n,r}\right)  -\frac{k_{n}}{n(1-\gamma_{n})}\exp\left(
-n(1-\gamma_{n})\lambda_{n,r}\right).
\end{eqnarray*}
The part $A_{2,n,r}$ is easily seen to be a $o\left(  n^{-s}\right)  $ where $s$
is a arbitrarily large exponent under the condition $k_{n}=o(n/\log n)$. Now,
If $a,b,x,y$ are real numbers such that $x<y<0<b<a$, we have \ $0<ae^{y}%
-be^{x}<\left(  a-b\right)  +b\left(  y-x\right)  $. Applying to $A_{1,n,r}$ this
inequality yields
\[
A_{1,n,r}\leq\frac{k_{n}}{n}\frac{2\gamma_{n}}{\left(  1-\gamma_{n}^{2}\right)
}+\left(  M_{n,r}-m_{n,r}\right)  \frac{2\gamma_{n}}{\left(  1+\gamma
_{n}\right)  }.
\]
Under the hypothesis that $f$ is $\alpha-$Lipschitzian, 
and the condition $n=O\left(  k_{n}^{1+\alpha}\right)$,  
we have $\left(  M_{n,r}-m_{n,r}\right)
=O({k_{n}}/{n})$, so that $A_{n,r}=A_{1,n,r}+A_{2,n,r}=O\left(  {k_{n}}
\gamma_{n}/n\right)  $.
Now, for $m_{n,r}\leq y\leq M_{n,r}$, it is easily seen that%
\[
\PP\left(  U_{2,n,r}>y\right)  -\PP\left(  U_{1,n,r}>y\right)  \leq2\gamma
_{n}\frac{n}{k_{n}}(M_{n,r}-m_{n,r}),
\]
and thus
$$
B_{n,r}\leq2\gamma_{n}\frac{n}{k_{n}}\left(  M_{n,r}-m_{n,r}\right)
^{2}=O\left(  \frac{k_{n}}{n}\gamma_{n}\right).
$$
Clearly, the bounds on
$A_{n,r}$ and $B_{n,r}$ are uniform in $r=1,...,k_{n}$, and thus we obtain the result.
\CQFD
\end{proof}
We quote a technical lemma.
\begin{Lemma}
\label{lemtrois}
If $k_{n}=o\left(  n\right)  $ and $h_{n}k_{n}\rightarrow\infty$ when
$n\rightarrow\infty,$%
$$
\lim_{n\rightarrow\infty} \sum_{r=1}^{k_n} \beta_{n,r}(x) = 1.
$$
\end{Lemma}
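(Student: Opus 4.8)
The plan is to reduce the double sum defining $\sum_{r}\beta_{n,r}(x)$ to a single Riemann-type sum and then to recognise that sum as a midpoint quadrature that converges to $1$. Summing the formula for $\beta_{n,r}(x)$ over $r=1,\dots,k_n$, the second term is independent of $r$ and therefore contributes a factor $k_n$; writing $S_n=\frac{1}{k_n}\sum_{r=1}^{k_n}K_n(x-x_r)$ one gets
\[
\sum_{r=1}^{k_n}\beta_{n,r}(x)=S_n\left(1+\frac{k_n}{n-k_n}\right)=\frac{n}{n-k_n}\,S_n .
\]
Since $k_n=o(n)$, the prefactor $n/(n-k_n)$ tends to $1$, so the whole statement reduces to showing $S_n\to 1$.

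Next I would identify $S_n$ as the midpoint quadrature, on the $k_n$ equal cells $I_{n,r}$ with centres $x_r$, of the function $g_n(t):=K_n(x-t)$ on $[0,1]$, so that $S_n=\frac{1}{k_n}\sum_{r}g_n(x_r)$ should approximate $\int_0^1 g_n(t)\,dt$. The change of variable $u=(x-t)/h_n$ gives
\[
\int_0^1 g_n(t)\,dt=\int_{(x-1)/h_n}^{x/h_n}K(u)\,du .
\]
For a fixed $x\in\,]0,1[\,$ and $h_n\downarrow0$, eventually $[x-Ah_n,x+Ah_n]\subseteq\,]0,1[$, so the whole support of $K$ is captured and the integral equals $\int_{-A}^{A}K=1$.

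The substantial step, and the main obstacle, is to bound the quadrature error $S_n-\int_0^1 g_n$. Here I would use that $K$ has a bounded first derivative: $g_n'(t)=-h_n^{-2}K'((x-t)/h_n)$, whence the total variation of $g_n$ on $[0,1]$ satisfies $\mathrm{Var}_{[0,1]}(g_n)\le\int_{\R}|g_n'|=h_n^{-1}\|K'\|_1=O(h_n^{-1})$. For a function of bounded variation a Riemann sum of mesh $\delta$ differs from the integral by at most $\delta\cdot\mathrm{Var}$, so with mesh $1/k_n$,
\[
\left| S_n-\int_0^1 g_n(t)\,dt\right|\le\frac{1}{k_n}\,\mathrm{Var}_{[0,1]}(g_n)=O\!\left(\frac{1}{h_n k_n}\right),
\]
which tends to $0$ precisely because $h_n k_n\to\infty$. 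Combining the three displays yields $S_n\to1$, and hence $\sum_{r}\beta_{n,r}(x)\to1$. The delicate point is exactly this last estimate: it is where the hypothesis $h_n k_n\to\infty$ is consumed, and it is essential that $K$ be assumed to have a bounded derivative so that $\mathrm{Var}(g_n)$ grows no faster than $h_n^{-1}$; the remaining two facts only require $h_n\downarrow0$ (already in force) and $k_n=o(n)$.
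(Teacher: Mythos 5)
Your proof is correct, and its first half is exactly the paper's reduction: the identity $\sum_{r=1}^{k_n}\beta_{n,r}(x)=\frac{n}{n-k_n}\cdot\frac{1}{k_n}\sum_{r=1}^{k_n}K_n(x-x_r)$, together with $n/(n-k_n)\to1$ coming from $k_n=o(n)$. The genuine difference is in the remaining limit $\frac{1}{k_n}\sum_{r=1}^{k_n}K_n(x-x_r)\to1$: the paper does not prove this at all, it simply quotes it as a known property (Corollary~2 of~\cite{ESAIM}), whereas you supply a self-contained proof. You view the sum as a midpoint Riemann sum of $g_n(t)=K_n(x-t)$ on $[0,1]$; you observe that $\int_0^1g_n(t)\,dt=1$ as soon as $[x-Ah_n,x+Ah_n]\subseteq\,]0,1[$, which holds for $n$ large since $x$ is interior and $h_n\downarrow0$; and you bound the quadrature error by the mesh times the total variation of $g_n$, namely $k_n^{-1}h_n^{-1}\|K'\|_1$, which tends to $0$ exactly under $h_nk_n\to\infty$. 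Both auxiliary facts you invoke are sound: the error of a tagged Riemann sum of a function of bounded variation is at most the mesh times its variation (by additivity of the variation over the cells), and the variation of $g_n$ is at most $\int_{\R}|g_n'|=h_n^{-1}\|K'\|_1$, legitimately, because the bounded derivative of $K$ makes $g_n$ Lipschitz, and $\|K'\|_1\leq 2A\sup|K'|<\infty$ thanks to the compact support. So your route is a valid replacement for the paper's citation: it costs extra work that the authors could legitimately delegate to their earlier paper, but it makes visible precisely where $h_nk_n\to\infty$ and the smoothness and compact support of $K$ are consumed, which the one-line appeal to~\cite{ESAIM} leaves hidden.
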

\begin{proof}
Remarking that 
$$
\sum_{r=1}^{k_n} \beta_{n,r}(x) = \frac{n}{n-k_n} \frac{1}{k_n}
\sum_{r=1}^{k_n} K_{n}(x-x_{r}),
$$
the result follows from the well-known property
$$
\lim_{n\rightarrow\infty}\frac{1}{k_{n}}\sum\limits_{r=1}^{k_{n}%
}K_{n}(x-x_{r})=1,
$$
see for instance~\cite{ESAIM}, Corollary~2.
\CQFD
\end{proof}
The next proposition is the key tool to extend the results
obtained on Poisson processes to samples.  
\begin{Prop}
\label{prop1}
If $k_{n}=o(n/\log n)$, $h_nk_n\to\infty$, and 
$n=O\left(k_{n}^{1+\alpha}\right)$, then, for
every $x\in\left]  0,1\right[  $,
$$
(nh_{n}^{1/2}/k_{n}^{1/2})\EE\left(\left\vert \hat{f}_{n}(x)-\hat{f}_{0,n}%
(x)\right\vert \right)\rightarrow0.
$$
\end{Prop}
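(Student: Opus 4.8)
The plan is to run the ``random sandwiching'' of Lemma~\ref{lemsand} against the tail bound of Lemma~\ref{lemun} and the increment bound of Lemma~\ref{lemdeux}. The first thing I would record is that neither $\hat f_n$ nor $\hat f_{0,n}$ actually depends on the auxiliary sequence $\gamma_n$: since $N_{0,n}=N_{1,n}+M_{1,n}$ is a sum of two independent Poisson variables, it is Poisson with mean $n(1-\gamma_n)+n\gamma_n=n$ whatever $\gamma_n$ is, so $\Sigma_{0,n}$ is the homogeneous process with mean measure $\mu_n$ and $\hat f_{0,n}$ has a law independent of $\gamma_n$ (while $\hat f_n$ is built from the genuine sample). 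Hence $\EE(|\hat f_n(x)-\hat f_{0,n}(x)|)$ does not depend on $\gamma_n$, and we are free to choose $\gamma_n\downarrow0$ afterwards so as to optimise the bound. I would then split according to the sandwiching event:
\[
\EE\!\left(|\hat f_n-\hat f_{0,n}|\right)=\EE\!\left(|\hat f_n-\hat f_{0,n}|\,1_{E_n}\right)+\EE\!\left(|\hat f_n-\hat f_{0,n}|\,1_{\Omega\smallsetminus E_n}\right).
\]

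On $E_n$, Lemma~\ref{lemsand} gives both $\hat f_{1,n}\le\hat f_n\le\hat f_{2,n}$ and $\hat f_{1,n}\le\hat f_{0,n}\le\hat f_{2,n}$, so $|\hat f_n-\hat f_{0,n}|\le\hat f_{2,n}-\hat f_{1,n}$ there. Dropping the indicator and taking expectations, the right-hand side is the linear form
\[
\EE\!\left(\hat f_{2,n}-\hat f_{1,n}\right)=\frac1{k_n}\sum_{r=1}^{k_n}\beta_{n,r}(x)\,\EE\!\left(U_{2,n,r}-U_{1,n,r}\right),
\]
in which Lemma~\ref{lemdeux} controls each increment by $O(k_n\gamma_n/n)$ uniformly in $r$ and Lemma~\ref{lemtrois} gives $\sum_{r=1}^{k_n}\beta_{n,r}(x)\to1$; this term is therefore $O(\gamma_n/n)$, and after multiplication by $nh_n^{1/2}/k_n^{1/2}$ it is $O\!\big(h_n^{1/2}\gamma_n/k_n^{1/2}\big)$.

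For the complementary event I would use a purely deterministic bound: every point of every process lies in $D$, so all extreme values satisfy $U_{n,r},U_{0,n,r}\le\max f=:M$, whence $\hat f_n,\hat f_{0,n}\le M\,k_n^{-1}\sum_{r=1}^{k_n}\beta_{n,r}(x)$, which is bounded by a constant for $n$ large by Lemma~\ref{lemtrois}. Thus $|\hat f_n-\hat f_{0,n}|$ is bounded by a constant $C$ on all of $\Omega$, and Lemma~\ref{lemun} gives
\[
\EE\!\left(|\hat f_n-\hat f_{0,n}|\,1_{\Omega\smallsetminus E_n}\right)\le C\,\PP(\Omega\smallsetminus E_n)\le 2C\exp\!\left(-\frac{1}{8}n\gamma_n^2\right),
\]
so after multiplication by $nh_n^{1/2}/k_n^{1/2}$ this piece is $O\!\big((nh_n^{1/2}/k_n^{1/2})\exp(-\frac{1}{8}n\gamma_n^2)\big)$.

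The heart of the argument --- and the only delicate point --- is the final choice of $\gamma_n$, which must reconcile two opposite pressures. The sandwiching term forces $\gamma_n$ to be \emph{small}, namely $\gamma_n=o\big((k_n/h_n)^{1/2}\big)$, while the tail term forces it to be \emph{large}, namely $n\gamma_n^2/\log n\to\infty$ (here one uses that $\log(nh_n^{1/2}/k_n^{1/2})=O(\log n)$, which holds because $h_n$ is bounded and $k_n\ge1$). These are compatible precisely because $h_n\log n/(nk_n)\to0$ under the stated hypotheses, so the admissible window $\big((c\log n/n)^{1/2},\,\varepsilon(k_n/h_n)^{1/2}\big)$ is eventually non-empty; taking for instance $\gamma_n=\omega_n(\log n/n)^{1/2}$ with $\omega_n\to\infty$ slowly enough that $\gamma_n\downarrow0$ and $\gamma_n=o\big((k_n/h_n)^{1/2}\big)$ makes both pieces vanish, which is the assertion. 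I expect the bookkeeping inherited from Lemmas~\ref{lemdeux} and~\ref{lemun} to be routine, and this scale-separation check to be exactly where the conditions on $(h_n,k_n,n)$ are consumed.
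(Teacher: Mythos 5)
Your proof follows the paper's own argument step for step: split on $E_n$; Lemmas~\ref{lemsand}, \ref{lemdeux} and \ref{lemtrois} for the sandwich piece; uniform boundedness plus Lemma~\ref{lemun} for the complement; then tune $\gamma_n$. Your opening observation --- that the law of the pair $(\hat f_n,\hat f_{0,n})$, hence the expectation to be bounded, does not depend on $\gamma_n$ at all, because $N_{0,n}$ is Poisson of mean $n(1-\gamma_n)+n\gamma_n=n$ --- is a welcome explicit justification of the free choice of $\gamma_n$ at the end, a logical point the paper leaves implicit.

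There is, however, a genuine error in the bookkeeping, inherited from a typo in Section~\ref{secun}: the displayed rewriting $\hat f_n(x)=\frac{1}{k_n}\sum_r\beta_{n,r}(x)U_{n,r}$ is inconsistent with definition~(\ref{eqdeux}). Expanding~(\ref{eqdeux}) with the stated $\beta_{n,r}$ (which already carries the $1/k_n$ factors) gives $\hat f_n(x)=\sum_r\beta_{n,r}(x)U_{n,r}$, with no prefactor; this is the only reading compatible with Lemma~\ref{lemtrois}, since otherwise $\hat f_n(x)$ would be of order $f(x)/k_n\to 0$ and Theorem~\ref{thmain1} could not hold. Hence the sandwich term is $\sum_r\beta_{n,r}(x)\EE(U_{2,n,r}-U_{1,n,r})=O(k_n\gamma_n/n)$, which after rescaling by $nh_n^{1/2}/k_n^{1/2}$ is $O\bigl((k_nh_n)^{1/2}\gamma_n\bigr)$, not $O\bigl(h_n^{1/2}\gamma_n/k_n^{1/2}\bigr)$ as you claim. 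This invalidates precisely the scale-separation analysis you identify as the heart of the proof: the true upper constraint is $\gamma_n=o\bigl((k_nh_n)^{-1/2}\bigr)$, a genuine restriction since $k_nh_n\to\infty$, whereas your constraint $\gamma_n=o\bigl((k_n/h_n)^{1/2}\bigr)$ is vacuous (every sequence tending to $0$ satisfies it). Concretely, your conditions admit $\gamma_n=(\log n/n)^{1/4}$ (take $\omega_n=(n/\log n)^{1/4}$); choosing $k_n=n/(\log n)^2$ and $h_n=(\log n)^3 n^{-1/2}$, which satisfy all hypotheses of the Proposition, the correctly computed sandwich bound is then of order $(\log n)^{3/4}\to\infty$, so your argument does not go through for all choices of $\gamma_n$ it permits. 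The repair is immediate: with the correct identity, the admissible window for $\gamma_n$ becomes $\bigl((c\log n/n)^{1/2},\varepsilon(k_nh_n)^{-1/2}\bigr)$, still eventually non-empty because $k_nh_n\log n/n\to0$ (use $k_n=o(n/\log n)$ and $h_n\downarrow 0$); the paper simply takes $\gamma_n=k_n^{-1/2}$, which gives $O(h_n^{1/2})$ for the sandwich piece and a tail exponent $-\frac{n}{8k_n}(1-o(1))\to-\infty$ for the complement.
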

\begin{proof}
From Lemma~\ref{lemsand}, we have
\begin{eqnarray*}
\EE\left(  \left\vert \hat{f}_{n}(x)-\hat{f}_{0,n}(x)\right\vert {\mathbf 1}%
_{E_{n}}\right) 
&\leq& \EE\left(\hat{f}_{2,n}(x)-\hat{f}_{1,n}(x)\right)\\
&=&\sum\limits_{r=1}^{k_{n}}\beta_{n,r}(x) \EE(U_{2,n,r}-U_{1,n,r})\\
&\leq &\sum\limits_{r=1}^{k_{n}}\beta_{n,r}(x) \max_{1\leq s \leq k_n}
\EE(U_{2,n,s}-U_{1,n,s})\\
&=&O\left(\frac{k_{n}\gamma_n}{n}\right),
\end{eqnarray*}
in view of Lemma~\ref{lemdeux} and Lemma~\ref{lemtrois}.
As a consequence,
\begin{equation}
\label{eqtmp1}
(nh_{n}^{1/2}/k_{n}^{1/2})\EE\left(\left\vert \hat{f}_{n}(x)-\hat
{f}_{0,n}(x)\right\vert {\mathbf 1}_{E_{n}}\right)
=O\left(  k_{n}^{1/2}h_{n}^{1/2}\gamma_{n}\right).
\end{equation}
Now, let $M=\sup\left\{  f\left(  x\right)  ,x\in\left[  0,1\right]  \right\}$.
Then, applying Lemma~\ref{lemtrois} again,
$$
\max\left\{  \hat{f}_{n}(x),\hat{f}_{0,n}(x)\right\}  \leq
M \sum\limits_{r=1}^{k_{n}}\beta_{n,r}(x)=O(1),
$$
and therefore, from Lemma~\ref{lemun},
\begin{eqnarray}
\label{eqtmp2}
(nh_{n}^{1/2}/k_{n}^{1/2})\EE\left(  \left\vert \hat{f}_{n}(x)-\hat{f}%
_{0,n}(x)\right\vert {\mathbf 1}_{\Omega\smallsetminus E_{n}}\right)
&=& (nh_{n}^{1/2}/k_{n}^{1/2})O(1)\PP\left(  \Omega\smallsetminus E_{n}\right)\nonumber\\
&  =&o(n)\exp\left(-\frac{1}{8}n\gamma_{n}^{2}\right)\nonumber\\
&  =&o(1)\exp\left(-\frac{n}{k_{n}}\left(\frac{1}{8}k_{n}\gamma_{n}^{2}-\frac{k_{n}}
{n}\log n\right)\right).
\end{eqnarray}
From (\ref{eqtmp1}) and (\ref{eqtmp2}) it suffices to take $\gamma_{n}=k_{n}^{-1/2}$ to obtain
the desired result.  
\CQFD
\end{proof}
The main theorem is now obtained without difficulty.
\paragraph{Proof of Theorem~\ref{thmain1}.}
Under the conditions $h_{n}k_{n}\rightarrow\infty$, $n=o(k_{n}^{1/2}h_{n}^{-1/2-\alpha })$, $n=o(k_{n}^{5/2}h_{n}^{3/2})$ and $k_{n}=o(n/\ln n)$, Theorem~5 of~\cite{ESAIM} asserts that%
\[
\left(  nh_{n}^{1/2}/k_{n}^{1/2}\right)  \left(  \hat{f}_{0,n}\left(
x\right)  -f\left(  x\right)  \right)  \Rightarrow    {\mathcal N}\left(
0,\sigma^{2}\right),
\]
while from Proposition~\ref{prop1},
\[
\left(  nh_{n}^{1/2}/k_{n}^{1/2}\right)  \left(  \hat{f}_{0,n}\left(
x\right)  -\hat{f}_{n}\left(  x\right)  \right)  \stackrel{\PP}{\rightarrow}0.
\]
Thus, the result  is an immediate application of Slutsky's theorem.
\CQFD


\begin{thebibliography}{99}                                                                                               %


\bibitem {G64}Geffroy J. $\left(  1964\right)  $ Sur un probl\`{e}me
d'estimation g\'{e}om\'{e}trique. \textit{Publications de l'Institut de
Statistique de l'Universit\'{e} de Paris}, XIII, 191-200.

\bibitem {NONPAR}Geffroy, J.,  Girard, S. and Jacob, P. (2006)  
Asymptotic normality of the ${L}_{1}$- error of a boundary estimate,
{\it Nonparametric Statistics}, \textbf{18}(1), 21-31.
\bibitem {ESAIM}Girard, S. and Jacob, P. (2004)   Extreme
values and kernel estimates of point processes boundaries.
\textit{ESAIM: Probability and Statistics}, \textbf{8}, 150-168.
\end{thebibliography}
\end{document}